\begin{document}

\newcommand{\Ima}{\operatorname{Im}}
\newcommand{\Irr}{\operatorname{Irr}}
\newcommand{\sss}{\mathfrak{s}}
\newcommand{\Cusp}{\operatorname{Cusp}}
\newcommand{\Temp}{\operatorname{Temp}}
\newcommand{\Std}{\operatorname{Std}}
\newcommand{\Cyc}{\operatorname{Cyc}}
\newcommand{\JH}{\operatorname{JH}}
\newcommand{\Hom}{\operatorname{Hom}}
\newcommand{\Aut}{\operatorname{Aut}}
\newcommand{\Mult}{\operatorname{Mult}}
\newcommand{\std}[1]{\mathfrak{z}(#1)}
\newcommand{\N}{\mathbb{N}}
\newcommand{\C}{\mathbb{C}}
\newcommand{\R}{\mathbb{R}}
\newcommand{\Z}{\mathbb{Z}}
\newcommand{\MS}[1]{\N({#1})}
\newcommand{\id}{\operatorname{id}}
\newcommand{\End}{\operatorname{End}}
\newcommand{\Ker}{\operatorname{Ker}}
\newcommand{\sgn}{\operatorname{sgn}}
\newcommand{\Seg}{\operatorname{Seg}}
\newcommand{\Reps}{\operatorname{Rep}}
\newcommand{\GL}{\operatorname{GL}}
\newcommand{\supp}{\operatorname{supp}}
\newcommand{\m}{\mathfrak{m}}
\newcommand{\n}{\mathfrak{n}}
\newcommand{\soc}{\operatorname{soc}}
\newcommand{\indu}{\operatorname{Ind}}
\newcommand{\cosoc}{\operatorname{cosoc}}
\newcommand{\lshft}[1]{\overset{\leftarrow}{#1}}               
\newcommand{\rshft}[1]{\overset{\rightarrow}{#1}}              
\newcommand{\abs}[1]{\left|{#1}\right|}
 \newcommand{\Symm}{\mathcal{S}}

\newcommand{\rest}{\big|}

\newtheorem{theorem}{Theorem}[section]
\newtheorem{lemma}[theorem]{Lemma}
\newtheorem{problem}{Problem}
\newtheorem{proposition}[theorem]{Proposition}
\newtheorem{conjecture}[theorem]{Conjecture}
\newtheorem{observation}[theorem]{Observation}
\newtheorem{question}{Question}
\newtheorem{remark}[theorem]{Remark}
\newtheorem{definition}[theorem]{Definition}
\newtheorem{corollary}[theorem]{Corollary}
\newtheorem{example}[theorem]{Example}
\newtheorem{claim}[theorem]{``Claim''}
\newtheorem{assumption}[theorem]{Assumption}
\newtheorem{wassumption}[theorem]{Working Assumption}

\newcommand{\Alberto}[1]{{\color{red}{#1}}}
\newcommand{\Max}[1]{{\color{blue}{#1}}}

\title[Cyclic representations of general linear $p$-adic groups]
{Cyclic representations of general linear $p$-adic groups}
\author{Maxim Gurevich}
\address{Department of Mathematics, Technion - Israel Institute of Technology, Haifa, Israel, 3200003.}
\email{maxg@technion.ac.il}

\author{Alberto M\'inguez}
\address{Fakultät für Mathematik, University of Vienna, Oskar-Morgenstern-Platz 1, 1090 Wien (Austria).}
\email{alberto.minguez@univie.ac.at}

\date{\today}

\begin{abstract}
Let $\pi_1,\ldots,\pi_k$ be smooth irreducible representations of $p$-adic general linear groups. We prove that the parabolic induction product $\pi_1\times\cdots\times \pi_k$ has a unique irreducible quotient whose Langlands parameter is the sum of the parameters of all factors (cyclicity property), assuming that the same property holds for each of the products $\pi_i\times \pi_j$ ($i<j$), and that for all but at most two representations $\pi_i\times \pi_i$ remains irreducible (square-irreducibility property). Our technique applies the recently devised Kashi\-wara-Kim notion of a normal sequence of modules for quiver Hecke algebras.

Thus, a general cyclicity problem is reduced to the recent Lapid-M\'inguez conjectures on the maximal parabolic case.







%
%
%
\end{abstract}

\maketitle
%
%
\section{Introduction}

Parabolic induction is a pivotal part of the smooth representation theory of reductive $p$-adic groups. In this note, we consider the fundamental case of the group $G = \GL_n(F)$, for a non-Archimedean local field $F$. Each Levi subgroup $M<G$ is isomorphic to a product of the form $\GL_{n_1}(F)\times \cdots \times \GL_{n_k}(F)$, with $n_1+\ldots+n_k = n$. An irreducible representation of $M$ takes the form $\pi_1\otimes \cdots\otimes\pi_k$, where each $\pi_i$ is an irreducible $\GL_{n_i}(F)$-representation.

We view parabolic induction as a product operation -- the so-called Bernstein-Zelevinsky product:
\[
\pi_1\times \cdots \times \pi_k : = \indu(\pi_1\otimes\cdots\otimes \pi_k)\;.
\]

While such products are known to be of finite length, information of higher precision on the nature of the resulting product representation remains elusive. One desired goal of the theory would be a satisfactory understanding of the subrepresentations lattice of the induced representation. 

In parallel development similar questions have been explored in the setting of representations of quiver Hecke algebras \cite{MR3314831,MR3758148} and of quantum affine algebras \cite{MR3916087}. In fact, when fixing type $A$ Lie data, those settings are closely related to the context of our problems through various known categorial equivalences.

The  Zelevinsky classification (in its dual/Langlands form, see Section \ref{sec:seg}), is a bijection $\m \mapsto L(\m)$ between multisegments -- an essentially combinatorial object -- and irreducible representations of $G$. The representation $L(\m)$ is  defined as the unique quotient (the Langlands quotient) of the \textit{standard} representation attached to $\m$.

A basic property of this classification is that for any multisegments $\m_1, \dots, \m_k$ the representation $L(\m_1+\dots+\m_k)$ occurs with multiplicity one in the Jordan--H\"older sequence of the representation $L(\m_1) \times\dots\times L(\m_k)$. 

Therefore it appears natural to ask the following questions:
\begin{enumerate}
  \item When is $L(\m_1+\dots+\m_k)$ a quotient of $L(\m_1) \times\dots\times L(\m_k)$?
  \item When is it the unique irreducible quotient?
\end{enumerate}

This latter property is a particular case of a more general notion of cyclicity, which we import from the quantum affine setting \cite{MR3916087}. We call a (possibly reducible) representation $\pi$ of $G$ \textit{cyclic}, if $\pi$ can be realized as a quotient of a standard representation.

Lapid and the second-named author have stated a series of geometric conjectures \cite{LM-conj   } addressing the two properties from above, in the case of $k=2$.

Meanwhile, in the domain of quiver Hecke algebras, Kashiwara-Kim \cite{MR4016058} introduced the notion of a \textit{normal sequence} of simple modules. Translated to our setting, normal sequences would constitute sequences of certain representations $\pi_1,\ldots,\pi_k$ with the additional assumption that $\pi_i\times \pi_i$ are irreducible (a key condition coined as \textit{square-irreducible} \cite{MR3866895}, or \textit{real} \cite{MR3314831}). Normal sequences possess the property that the product $\pi_1\times \cdots \times \pi_k$ has a unique irreducible quotient.

The prominence of the square-irreducible property has been growing in the domain of $p$-adic groups as well. In fact, the conjectures of \cite{LM-conj} were motivated and proved in certain cases with the additional assumption that at least one the two representations is square-irreducible.

In this note we reduce the above-mentioned questions from the case of $k>2$ to the case of $k=2$, again with an assumption on square-irreducibility: We need to assume it on all factors, but at most two.


Our main theorem, Theorem \ref{thm:main-sqirr}, states that a relative cyclicity condition on a sequence of square-irreducible representations is sufficient to fulfil the requirement for a normal sequence, in the language of \cite{MR4016058}. In particular we obtain in Corollary \ref{cor:main},  a simple proof to the fact that under square-irreducibility assumptions, $\pi_1 \times \cdots\times \pi_k$ is irreducible, if and only if, $\pi_i\times \pi_j$, $i\neq j$ are irreducible.

An attempt to resolve similar questions in the absence of square-irreducibility assumptions is expected to require methods that are different from what is used in this note, and may follow the lines of the mentioned work of Hernandez \cite{MR3916087}. We leave this task for future efforts.

A particular application of that Theorem \ref{thm:main-sqirr} surfaced recently in the work of the first-named author \cite{me-restriction} in the proof of non-tempered Gan-Gross-Prasad branching laws \cite{ggp-non}. The techniques used in that work required some non-trivial categorical equivalences that translate $p$-adic problems into the quantum affine algebras realm. Since all products that appear in that context are built out of square-irreducible representations, our new ``native" $p$-adic proof considerably simplifies the proof of the criterion used in \cite{me-restriction}.

\subsection*{Acknowledgements}

We would like to thank David Hernandez, David Kazhdan, Erez Lapid and Marko Tadi\'c for useful discussions.
We also thank Guy Henniart for pointing out an embarrassing error in a previous version of this note.

The first-named author is partially supported by the Israel Science Foundation (grant No. 737/20). This work was begun while the first-named author was a research fellow at the National University of Singapore, supported by MOE Tier 2 grant 146-000-233-112. The first-named author would like to thank Wee Teck Gan and Lei Zhang for support during that fellowship, and the University of Vienna for their hospitality and support.

\section{Preliminaries}
\subsection{Representation theory of non-Archimedean $\GL_n$}

Let $F$ be a fixed non-Archime\-dean locally compact field of residue characteristic $p$.

For $n\in \mathbb{Z}_{\ge1}$, let $\Reps_n$ denote the abelian category of complex, smooth, finite length representations of the group $G_n:=\GL_n(F)$. Let $\Irr_n$ denote the set of isomorphism classes of irreducible elements in $\Reps_n$. Set  $\Reps=\oplus_{n\ge0}\Reps_n$ and $\Irr=\coprod_{n\ge0}\Irr_n$. Here we treat $\Reps_0$ as the trivial group.

For any $\pi\in \Reps_n$ and any $s\in \mathbb{C}$, we write $\pi(s)$ for the twist of $\pi$ by  $|{\rm det}|^s_{F}$, where  $|\ |_{F}$ is the normalized absolute value on $F$. We denote by $\pi^\vee$ the contragredient (smooth dual) representation of $\pi \in \Reps$.

We will sometimes refer to the underlying complex vector space of a representation $\pi\in \Reps$ by the same notation $\pi$.

Let $\mathcal{R}_n$ be Grothendieck group of $\Reps_n$. We denote by $\pi\mapsto [\pi]$ the natural semi-simplification map $\Reps_n \to \mathcal{R}_n$.
We write $\tau_1\leq \tau_2$ for elements $\tau_1,\tau_2\in \mathcal{R}_n$, if $\tau_2-\tau_1 = [\sigma]$, for a representation $\sigma \in \Reps_n$.

\subsection{Parabolic induction}

For $n\in \mathbb{Z}_{\ge1}$, let $\alpha = (n_1, \ldots, n_r)$ be a composition of $n$. Let $M_\alpha$ be the subgroup of $G_n$ isomorphic to $G_{n_1} \times \cdots \times G_{n_r}$ consisting of matrices which are diagonal by blocks of size $n_1, \ldots, n_r$. Let $P_\alpha$ the subgroup of $G_n$ generated by $M_\alpha$ and the upper
unitriangular matrices. A standard parabolic subgroup of $G_n$ is a subgroup of the form $P_\alpha$ and its standard Levi factor is $M_\alpha$.


For given representations $\pi_i\in \Reps_{n_i}$, $i=1,\ldots,r$, we write
\[
\pi_1\times\cdots\times \pi_r\in \Reps_{n_1+ \ldots +n_r}
\]
to be the representation obtained by normalized parabolic induction (through $P_\alpha$) of the representation $\pi_1\otimes\cdots\otimes \pi_r$ of the group $M_\alpha$, where $\alpha = (n_1,\ldots,n_r)$.

We write $\Cusp$ the subset of $\Irr$ made of cuspidal representations, that is irreducible representations which are not quotients of proper induced representations.

Recall that $\times$ gives an associative product structure on the group $\oplus_{n \ge0} \mathcal{R}_n$. Moreover,
the product is commutative, in the sense that $[\pi_1\times\pi_2] = [\pi_2 \times \pi_1]$, for all $\pi_1, \pi_2 \in \Reps$. In particular, if $\pi_1\times\pi_2 \in \Irr$ then $\pi_1\times\pi_2 \simeq \pi_2 \times \pi_1$.

On the categorical level, the product functor $(\pi_1,\ldots, \pi_1) \mapsto \pi_1\times \cdots\times \pi_r$ is exact in each variable. In particular, when $\tau$ is a subrepresentation of $\pi_1$, $\tau\times \pi_2$ is viewed naturally as a sub-representation of $\pi_1\times \pi_2$.

One symmetry of the categories involved in $\Reps$ is often referred to as the Gelfand-Kazhdan duality \cite{MR0404534}. We will state here one corollary of this symmetry which will be useful for our purposes.

\begin{proposition}\label{prop:GK}
For all $\pi_1,\ldots,\pi_t \in \Irr$, the socle (maximal semisimple subrepresentation) of $\pi_1\times \cdots\times \pi_t$ is isomorphic to the co-socle (maximal semisimple quotient) of $\pi_t\times \cdots\times \pi_1$ .


\end{proposition}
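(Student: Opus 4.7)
The plan is to exhibit an exact contravariant involution $\iota$ on $\Reps$ that fixes every irreducible representation and satisfies $\iota(\pi_1\times\cdots\times\pi_t) \cong \pi_t\times\cdots\times\pi_1$. Given such an $\iota$, the proposition is immediate: setting $V := \pi_1\times\cdots\times\pi_t$ and $V' := \pi_t\times\cdots\times\pi_1$, contravariance of $\iota$ exchanges semisimple sub-representations with semisimple quotients, and since $\iota$ fixes the semisimple representation $\soc(V)$ up to isomorphism, one gets
\[
\soc(V) \;\cong\; \iota(\soc(V)) \;\cong\; \cosoc(\iota(V)) \;\cong\; \cosoc(V').
\]

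I would construct $\iota$ as $\iota(\pi) := (\pi^\theta)^\vee$, where $\theta(g) = {}^tg^{-1}$ is the Gelfand--Kazhdan automorphism of $\GL_n$ and $\pi^\theta$ denotes the twist of $\pi$ by $\theta$. Since $\theta$ is a group automorphism, $\pi \mapsto \pi^\theta$ is an exact covariant auto-equivalence of $\Reps$; composed with the exact contravariant contragredient, this makes $\iota$ exact contravariant and involutive. The Gelfand--Kazhdan theorem \cite{MR0404534} asserts $\pi^\theta \cong \pi^\vee$ for every $\pi \in \Irr$, from which $\iota(\pi) \cong (\pi^\vee)^\vee \cong \pi$ on irreducibles; by additivity, $\iota$ fixes every semisimple representation up to isomorphism.

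The crucial reversal identity $(\pi_1\times\cdots\times\pi_t)^\theta \cong \pi_t^\theta\times\cdots\times\pi_1^\theta$ relies on the observation that $\theta$ sends the standard upper-triangular parabolic $P$ to its opposite $\overline{P}$ and preserves the standard block-diagonal Levi $M$ (acting as transpose-inverse on each block); hence $(\pi_1\times\cdots\times\pi_t)^\theta$ is naturally realized as the normalized induction from $\overline{P}$ of $\pi_1^\theta\otimes\cdots\otimes\pi_t^\theta$, and conjugation by the longest Weyl element to return to induction from an upper-triangular parabolic reverses the order of the Levi blocks. Combining this with the compatibility $(\sigma_1\times\cdots\times\sigma_t)^\vee \cong \sigma_1^\vee\times\cdots\times\sigma_t^\vee$ (normalized parabolic induction commutes with contragredient) and the Gelfand--Kazhdan theorem applied to each irreducible $\pi_i^\theta$ (yielding $(\pi_i^\theta)^\vee \cong (\pi_i^\theta)^\theta = \pi_i$ via $\theta^2 = \id$) produces the required identification $\iota(V) \cong V'$. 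The main technical obstacle I expect is precisely this reversal identity, which requires careful bookkeeping of the modular character $\delta^{1/2}$ under the change from $\overline{P}$ to a standard upper-triangular parabolic via the Weyl conjugation.
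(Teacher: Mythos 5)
Your proposal is correct, and it is precisely the argument the paper has in mind: the paper states Proposition \ref{prop:GK} without proof, attributing it directly to the Gelfand--Kazhdan duality \cite{MR0404534}, and your construction of the exact contravariant anti-equivalence $\iota(\pi)=(\pi^\theta)^\vee$ (with $\theta(g)={}^t g^{-1}$), together with the identification $\iota(\pi_1\times\cdots\times\pi_t)\cong\pi_t\times\cdots\times\pi_1$ via the parabolic-reversal computation, is the standard way to extract this corollary from that duality. The one technical point you flag -- matching the half-sum twists when passing from induction off $\overline{P}_\alpha$ to induction off $P_{\alpha'}$ by the Weyl conjugation -- does work out cleanly, because $\theta$ inverts each block determinant on the Levi, so $\delta_{P_\alpha}^{1/2}\circ\theta=\delta_{\overline{P}_\alpha}^{1/2}$ on $M_\alpha$, and the further Weyl conjugation carries $\delta_{\overline{P}_\alpha}^{1/2}$ to $\delta_{P_{\alpha'}}^{1/2}$; thus normalized induction is sent to normalized induction throughout and no extraneous unramified twist appears.
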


\subsection{Intertwining operators}
We recall here some well known facts about intertwining operators \cite[\textsection IV.1]{MR1989693}.
For $\pi_1,\pi_2\in \Reps$ and $s\in \mathbb{C}$, we write $M_{\pi_1,\pi_2}(s)$ for the standard intertwining operator
\[
M_{\pi_1,\pi_2}(s):\pi_1(s)\times\pi_2\rightarrow\pi_2\times\pi_1(s)\;.
\]

It depends on the choice of a Haar measure, but this dependence will not play a role in our discussion.


For representations $\pi_1,\pi_2\in \Reps$, let $d_{\pi_1,\pi_2}\ge0$ be the order of the pole of $M_{\pi_1,\pi_2}(s)$ at $s=0$ and let
\[
R_{\pi_1,\pi_2}=\lim_{s\rightarrow0}s^{d_{\pi_1,\pi_2}}M_{\pi_1,\pi_2}(s).
\]
Thus, $R_{\pi_1,\pi_2}$ is a non-zero intertwining operator from $\pi_1\times\pi_2$ to $\pi_2\times\pi_1$.

The following properties are known to hold \cite[Lemma 2.3]{MR3866895}\footnote{In parts (6) and (7) of Lemma 2.3 \cite{MR3866895}, one needs to require that $\pi_1$ and $ \pi_2$ are irreducible.}.
\begin{lemma} \label{lem:intert}
Let $0\ne\pi_i\in\Reps(G_{n_i})$, $i=1,2,3$ and let $\tau$ de a subrepresentation of $\pi_1$. Then
\begin{enumerate}
\item\label{part: nonzeroR} The operator $R_{\pi_1,\pi_2}$ restricts to an intertwining operator $\tau\times\pi_2\rightarrow\pi_2\times\tau$. More precisely,
\[
R_{\pi_1,\pi_2}\rest_{\tau\times\pi_2}=\begin{cases}R_{\tau,\pi_2}&\text{if }\tau\ne0\text{ and }d_{\pi_1,\pi_2}=d_{\tau,\pi_2},\\
0&\text{otherwise.}\end{cases}
\]
\item\label{part: ifisom} The inequality $d_{\pi_1\times\pi_2,\pi_3}\le d_{\pi_1,\pi_3}+d_{\pi_2,\pi_3}$ holds, together with the equality
\[
(R_{\pi_1,\pi_3}\times\id_{\pi_2})\circ(\id_{\pi_1}\times R_{\pi_2,\pi_3})=
\begin{cases}R_{\pi_1\times\pi_2,\pi_3}&\text{if }d_{\pi_1\times\pi_2,\pi_3}=d_{\pi_1,\pi_3}+d_{\pi_2,\pi_3},\\0&\text{otherwise.}\end{cases}
\]
Moreover, $d_{\pi_1\times\pi_2,\pi_3}=d_{\pi_1,\pi_3}+d_{\pi_2,\pi_3}$ when at least one of $R_{\pi_i,\pi_3}$, $i=1,2$ is an isomorphism.
\end{enumerate}
\end{lemma}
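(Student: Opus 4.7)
The plan is to reduce both parts to formal properties of the family $M_{\pi,\pi'}(s)$: its functoriality in the first argument, and the cocycle identity that decomposes the intertwiner for a product. All of the analytic input (meromorphic continuation in $s$, rationality, etc.) is already absorbed into the definitions of $d_{\pi,\pi'}$ and $R_{\pi,\pi'}$, so what is left is leading-term bookkeeping at $s=0$.

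For part (1), the key observation is that $M_{\pi,\pi'}(s)$ is functorial in the first argument: the inclusion $\tau \hookrightarrow \pi_1$ induces a commuting square in which $M_{\tau,\pi_2}(s)$ is literally the restriction of $M_{\pi_1,\pi_2}(s)$ to $\tau(s) \times \pi_2$. This immediately gives $d_{\tau,\pi_2} \le d_{\pi_1,\pi_2}$. Multiplying both sides of the restriction identity by $s^{d_{\pi_1,\pi_2}}$ and letting $s \to 0$ then produces the stated dichotomy: when the pole orders agree, the limit is by definition $R_{\tau,\pi_2}$; when $d_{\tau,\pi_2}$ is strictly smaller, we are multiplying a family whose pole has order $d_{\tau,\pi_2}$ by a strictly higher power of $s$, so the limit vanishes.

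For part (2), I would start from the cocycle identity
\[
M_{\pi_1 \times \pi_2,\,\pi_3}(s) \;=\; \bigl(M_{\pi_1,\pi_3}(s) \times \id_{\pi_2(s)}\bigr) \circ \bigl(\id_{\pi_1(s)} \times M_{\pi_2,\pi_3}(s)\bigr),
\]
which arises by factoring the long intertwiner through the intermediate arrangement $\pi_1(s) \times \pi_3 \times \pi_2(s)$; note that the $s$-twist on $\pi_1\times\pi_2$ restricts to an $s$-twist on each block, which is what makes the factorization legitimate. Reading off pole orders gives the inequality $d_{\pi_1\times\pi_2,\pi_3} \le d_{\pi_1,\pi_3} + d_{\pi_2,\pi_3}$, and multiplying both sides by $s^{d_{\pi_1,\pi_3}+d_{\pi_2,\pi_3}}$ before taking the limit yields exactly the displayed case analysis.

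Finally, for the non-vanishing criterion, suppose $R_{\pi_1,\pi_3}$ is an isomorphism. Then by exactness of parabolic induction in each variable, $R_{\pi_1,\pi_3} \times \id_{\pi_2}$ is also an isomorphism; composing it with the non-zero operator $\id_{\pi_1} \times R_{\pi_2,\pi_3}$ cannot produce zero, forcing equality of the pole orders. The case $R_{\pi_2,\pi_3}$ an isomorphism is symmetric. There is no substantial obstacle here beyond this bookkeeping; the only mildly delicate point is making sure the limit really detects the pole order rather than some spurious cancellation, which is handled precisely by the functoriality and exactness invoked above.
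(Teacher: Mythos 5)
The paper does not prove this lemma; it quotes it verbatim from Lapid--M\'inguez \cite[Lemma 2.3]{MR3866895} and moves on. So there is no in-paper argument to compare against, only the cited source. With that caveat, your proof is correct and is the standard one, and is essentially what one finds in that reference: part (1) rests on the observation that the meromorphic family $M_{\pi_1,\pi_2}(s)$ restricted to the subspace $\tau(s)\times\pi_2$ is literally $M_{\tau,\pi_2}(s)$ (both are given by the same intertwining integral applied to functions valued in the smaller space), which immediately gives $d_{\tau,\pi_2}\le d_{\pi_1,\pi_2}$ and then the leading-term dichotomy at $s=0$; part (2) is the cocycle factorization $M_{\pi_1\times\pi_2,\pi_3}(s)=(M_{\pi_1,\pi_3}(s)\times\id)\circ(\id\times M_{\pi_2,\pi_3}(s))$ together with the same leading-term bookkeeping; the ``moreover'' follows since a composition with an isomorphism on one side of a nonzero operator is nonzero, which forces the pole orders to add up.

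Two very small remarks. First, in the ``moreover'' step you invoke exactness of parabolic induction to promote $R_{\pi_1,\pi_3}$ to $R_{\pi_1,\pi_3}\times\id_{\pi_2}$, but no exactness is needed there: any functor carries isomorphisms to isomorphisms, and $(-)\times\pi_2$ is a functor. Second, implicit in the case analysis of part (1) is that $R_{\tau,\pi_2}$ is well-defined when $\tau\ne0$, i.e.\ that the restricted family $M_{\tau,\pi_2}(s)$ is not identically zero; this holds because it is generically an isomorphism on the nonzero space $\tau(s)\times\pi_2$. Neither point is a gap, just places where the argument could be stated with a touch more precision.
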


\section{Cyclicity}\label{sect:cycl}
\subsection{The theory of segments}\label{sec:seg}
Let us briefly recall the Langlands-Zelevinsky theory \cite{MR584084} of the classification of the elements of $\Irr$.

A \emph{segment} is a nonempty finite set of the form
\[
[a,b]_\rho=\{\rho(a),\rho(a+1), \dots,\rho(b)\}
\]
where $\rho \in\Cusp$ and $a \leq b$ are some integers.
For any such $[a,b]_\rho$ we set $L([a,b]_\rho)\in \Irr$ the unique irreducible quotient of:
\[
\rho(a) \times \rho(a+1)\times\dots \times\rho(b).
\]
We say that $[a',b']_{\rho'}$ \emph{precedes} $[a,b]_{\rho}$ if $\rho(b) \notin [a',b']_{\rho'}$, $\rho'(b')  \in[a-1,b-1]_{\rho}$ and
$\rho'(a') \notin [a,b]_{\rho}$.

A \emph{multisegment} is a formal sum $\m= \Delta_1 +\dots+\Delta_N$ of segments. We denote $\Mult$ the set of multisegments. To each multisegment $\m$ we attach a representation $\Sigma(\m)\in \Reps$ in the following way. We choose a sequence of segments $\Delta_1, \dots, \Delta_N$ such that for all $i<j$, $\Delta_i$ does not precede $\Delta_j$  and $\m=\Delta_1 +\dots+\Delta_N$. The sequence $\Delta_1, \dots, \Delta_N$ always exists but is not defined uniquely. However the induced representation
$$\Sigma(\m):= L(\Delta_1)\times\dots\times L(\Delta_N)$$
is uniquely determined by $\m$ and is called the \emph{standard representation} attached to $\m$. We denote $\Std$ the set of standard representations.

For every $\m\in \Mult$, we denote by $L(\m)$ the unique irreducible quotient of $\Sigma(\m)$. The Zelevinsky classification (in its dual form) states that the map $\m \mapsto L(\m)$ is a bijection from $\Mult$ to $\Irr$.

We will also write $\Sigma(\pi):= \Sigma(\m)\in \Std$, when $\pi=L(\m)\in \Irr$.

The sum of multisegments induces a additive structures $\ast$ and $\diamond$ in $\Irr$ and $\Std$, respectively. Precisely, if $\m,\m'\in \Mult$, then $L(\m)\ast L(\m'):=L(\m+\m')$ and $\Sigma(\m)\diamond \Sigma(\m'):=\Sigma(\m+\m')$. We say that $L(\m)\preceq L(\m')$,
when $[L(\m)]\leq [\Sigma(\m')]$ (i.e. the isomorphism class of $L(\m)$ appears as a subquotient in $\Sigma(\m')$). This relation puts a partial order on $\Irr$ \cite[\textsection 7]{MR584084}.

\begin{remark}\label{rem:sum}
\begin{enumerate}
  \item As a consequence of the commutativity of the parabolic induction on the Grothendieck group level, it is easy to see that $[\Sigma_1\diamond \Sigma_2] = [\Sigma_1\times \Sigma_2]$, for all $\Sigma_1,\Sigma_2\in \Std$.

  \item\label{it:sum2}
It is known (see, for example, \cite[Theorem 2.6]{MR3573961}) that $\pi_1\ast \cdots \ast \pi_r$ appears with multiplicity $1$ in the Jordan-H\"{o}lder series of $\pi_1\times\cdots \times \pi_r$, for all $\pi_1,\ldots,\pi_r\in \Irr$. In particular, when $\pi_1\times\cdots \times \pi_r$ is irreducible, we necessarily have $ \pi_1\times\cdots \times \pi_r \cong \pi_1\ast\cdots \ast \pi_r$.

\end{enumerate}
\end{remark}

\subsection{Cyclic representations}
\begin{definition}
We say that a representation $\pi\in \Reps$ is \textit{cyclic}, if there is a standard representation $\Sigma\in \Std$ with a surjective homomorphism $\Sigma \to \pi$.
\end{definition}

Our choice of terminology for this definition is based on the analogous notion in the quantum affine setting \cite{MR3916087}, where cyclic representations are those that are generated by their highest weight vector. In both settings the notion differs from the purely algebraic notion of being generated by a  single vector.

Yet, let us mention that it is known (\cite[Proposition XI.2.6(4)]{MR1721403}) that for cyclic representations in our sense there is indeed a (choice of a) generating vector.

\begin{definition}
We say that a tuple of representation $\pi_1,\ldots,\pi_k \in \Reps$ satisfies $\Cyc(\pi_1,\ldots,\pi_k)$, if $\pi_1\times\cdots\times \pi_k$ is cyclic.
\end{definition}

All irreducible representations in $\Reps$ are cyclic by the Zelevinsky classification. In particular, if $\pi_1\times \cdots \times \pi_k$ is irreducible for $\pi_1,\ldots,\pi_k\in \Irr$, then $\Cyc(\pi_1,\ldots,\pi_k)$ holds. More explicitly, in this case we have a surjection
\[
\Sigma(\pi_1) \diamond\cdots \diamond \Sigma(\pi_k) \to \pi_1 \ast \cdots\ast \pi_k\cong \pi_1\times\cdots \times \pi_k\;.
\]

\begin{proposition}\label{prop:cyclic}
For representations $\pi_1 ,\ldots, \pi_k\in \Irr$, the following are equivalent:
\begin{enumerate}
  \item Property $\Cyc(\pi_1,\ldots,\pi_k)$ holds.\label{it:1}
  \item The product $\pi_1\times\cdots \times \pi_k$ is isomorphic to a quotient of the standard representation $\Sigma(\pi_1) \diamond \cdots \diamond \Sigma(\pi_k)$.\label{it:2}
  \item The representation $\pi_1\times\cdots \times \pi_k$ has a unique irreducible quotient, and that quotient is isomorphic to $\pi_1\ast \cdots\ast \pi_k$.\label{it:3}
\end{enumerate}
\end{proposition}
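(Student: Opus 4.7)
The plan is to close the circle of implications via $(2) \Rightarrow (1)$, $(2) \Rightarrow (3)$, $(1) \Rightarrow (2)$, and $(3) \Rightarrow (2)$, with the first three being short consequences of the Langlands-Zelevinsky classification and the last being the main content. Specifically, $(2) \Rightarrow (1)$ is immediate from $\Sigma(\pi_1) \diamond \cdots \diamond \Sigma(\pi_k) \in \Std$, and $(2) \Rightarrow (3)$ follows because any irreducible quotient of $\tau := \pi_1 \times \cdots \times \pi_k$ factors through the unique Langlands quotient $L(\m)$ of the surjecting standard $\Sigma(\m)$, where $\m := \m_1 + \cdots + \m_k$.

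For $(1) \Rightarrow (2)$, I will start from a given surjection $\Sigma(\m') \twoheadrightarrow \tau$ and identify $\m' = \m$ using two $\preceq$-inequalities. Remark~\ref{rem:sum}(\ref{it:sum2}) puts $L(\m) \in \JH(\tau) \subseteq \JH(\Sigma(\m'))$, hence $L(\m) \preceq L(\m')$ by the very definition of $\preceq$. Conversely, exactness of parabolic induction applied to the cosocle maps $\Sigma(\pi_i) \twoheadrightarrow \pi_i$ produces a surjection $\Sigma(\pi_1) \times \cdots \times \Sigma(\pi_k) \twoheadrightarrow \tau$ whose source has Grothendieck class equal to $[\Sigma(\m)]$ by Remark~\ref{rem:sum}, forcing every JH factor of $\tau$, and in particular $L(\m')$, to be $\preceq L(\m)$. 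Combined, these force $\m = \m'$, so the given standard is $\Sigma(\m) = \Sigma(\pi_1) \diamond \cdots \diamond \Sigma(\pi_k)$.

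The core step is $(3) \Rightarrow (2)$. Assuming $\tau$ has $L(\m)$ as its unique irreducible quotient, Remark~\ref{rem:sum}(\ref{it:sum2}) gives $\dim \Hom(\tau, L(\m)) = 1$, and likewise $\dim \Hom(\Sigma(\m), L(\m)) = 1$. I will aim to construct a morphism $\phi : \Sigma(\m) \to \tau$ whose composition with the cosocle projection $\tau \twoheadrightarrow L(\m)$ is, up to scalar, the cosocle projection of $\Sigma(\m)$. The construction will be a chain of normalized intertwining operators from Lemma~\ref{lem:intert} reordering the non-preceding product $\Sigma(\m) = L(\Delta_{(1)}) \times \cdots \times L(\Delta_{(N)})$ to the ``blockwise'' order $\Sigma(\pi_1) \times \cdots \times \Sigma(\pi_k)$ (which is non-preceding within each $\m_i$ but not globally), followed by the surjection $\Sigma(\pi_1) \times \cdots \times \Sigma(\pi_k) \twoheadrightarrow \tau$. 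Once such $\phi$ has nontrivial cosocle-level component, surjectivity follows formally: if $\phi(\Sigma(\m)) \subsetneq \tau$, the nonzero quotient $\tau / \phi(\Sigma(\m))$ would admit a simple quotient equal to $L(\m)$ by (3), yielding a morphism $\tau \to L(\m)$ which vanishes on $\phi(\Sigma(\m))$ and is therefore linearly independent from the cosocle projection of $\tau$, contradicting $\dim \Hom(\tau, L(\m)) = 1$.

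The main obstacle I anticipate is ensuring that the composite $\phi$ has nonzero cosocle-level component, i.e., that its image is not swallowed by the radical of $\tau$. This amounts to tracking the multiplicity-one copy of $L(\m)$ in the Jordan-H\"older content through each intertwining swap, and I would either carry this out explicitly using the restriction and multiplicativity formulas of Lemma~\ref{lem:intert}, or bypass it via an abstract projectivity/universality property of $\Sigma(\m)$ inside the Serre subcategory of representations with all JH factors $\preceq L(\m)$: since $\tau$ lies in that subcategory and surjects onto $L(\m)$, such a property would immediately produce the desired lift $\Sigma(\m) \to \tau$.
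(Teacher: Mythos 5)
Your handling of the easy implications is correct, and your argument for $(1)\Rightarrow(2)$ is essentially the paper's argument (the paper runs the same two-sided $\preceq$ comparison but packages it as a proof of $(1)\Rightarrow(3)$, which is equivalent once $(2)\Leftrightarrow(3)$ is in place). The circle must still be closed through $(3)\Rightarrow(2)$, and that is where your proposal has a genuine gap. The paper does not prove $(3)\Rightarrow(2)$; it cites \cite[Lemma~4.2.(5)]{MR3573961}. You instead attempt a self-contained construction, and you explicitly flag the unresolved step: showing that the composite $\phi:\Sigma(\m)\to\Sigma(\pi_1)\times\cdots\times\Sigma(\pi_k)\twoheadrightarrow\tau$ has nonzero cosocle component. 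Neither of your two proposed resolutions is carried out. The ``abstract projectivity/universality of $\Sigma(\m)$'' route is not something you can simply invoke: standard modules are not projective in any obvious ambient category here, and the relative vanishing $\operatorname{Ext}^1(\Sigma(\m),M)=0$ for $M$ with all constituents $\preceq L(\m)$ is itself a nontrivial highest-weight-type statement that would need a reference or proof of comparable weight to the result you are trying to establish. The explicit intertwining-operator route is more promising (one would use that any two segments satisfy $\Cyc$, so each normalized operator has image of the form $L(\Delta)\ast L(\Delta')$ by Lemma~\ref{lem:Rcyclic}, and then chase the unique copy of $L(\m)$ through the chain and through the surjection $\Sigma(\pi_1)\times\cdots\times\Sigma(\pi_k)\twoheadrightarrow\tau$ using multiplicity one), but as written it is only a plan, not a proof, and it would have to be careful not to secretly use Proposition~\ref{prop:nontriv-intert} or Lemma~\ref{lem:Rcyclic} in a way that is circular with the proposition you are proving.

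In short: your overall logical scheme is sound and the three short implications are correct, but $(3)\Rightarrow(2)$ — which you yourself identify as ``the core step'' — remains a sketch with an acknowledged hole. Either fill in the multiplicity-tracking argument rigorously, or do what the paper does and cite \cite[Lemma~4.2.(5)]{MR3573961} for this direction.
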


\begin{proof}
Clearly, \eqref{it:2} implies \eqref{it:1}. The fact that \eqref{it:3} implies \eqref{it:2} was shown in \cite[Lemma 4.2.(5)]{MR3573961}. Let us prove that  \eqref{it:1} implies \eqref{it:3}.

Suppose $\Cyc(\pi_1,\ldots,\pi_k)$ holds.  Then, $\pi_1\times\cdots\times \pi_k$ is a quotient of $\Sigma(\n)\in \Std$, for $\n\in \Mult$. By Remark \ref{rem:sum}, this means that $\pi_1\ast\cdots\ast \pi_k \preceq L(\n)$. On the other hand, since $L(\n)$ is the unique irreducible quotient of $\Sigma(\n)$, we obtain that
\[
[L(\n)] \leq [\pi_1\times\cdots \times \pi_k] \leq [\Sigma(\pi_1)\times \cdots \times \Sigma(\pi_k)] = [\Sigma(\pi_1)\diamond \cdots \diamond \Sigma(\pi_k)] = [\Sigma(\pi_1\ast\cdots\ast \pi_k)]\;,
\]
which implies that $L(\n) \preceq \pi_1\ast\cdots\ast \pi_k$. Hence, $L(\n) = \pi_1\ast\cdots\ast \pi_k$ and so $\pi_1\ast\cdots\ast \pi_k$ is the unique irreducible quotient of $\Sigma(\n)$ and hence of  $\pi_1\times\cdots\times \pi_k$.

%
%

\end{proof}
\begin{remark}
For given $\pi,\pi'\in \Irr$, we have therefore $\Cyc(\pi,\pi')$ if and only if ${\rm SSA}(\m,\m')$ in the sense of \cite[Definition 2]{LM-conj}, where $\m$ and $\m'$ are the Zelevinsky multisegments corresponding respectively to $\pi$ and $\pi'$. Through this identification, further aspects of cyclic representations are explored in \cite{LM-conj}.

We prefer our terminology in this setting because of the clear analogy to the quantum affine setting and the work of Hernandez in \cite{MR3916087}.
\end{remark}

\subsection{Cyclic representations and intertwining operators}

\begin{lemma}\label{lem:Rcyclic}
Suppose that $\Cyc(\pi_1,\pi_2)$ holds for $\pi_1,\pi_2\in \Irr$. Then, the image of $R_{\pi_1,\pi_2}$ is an irreducible representation given by $\pi_1\ast \pi_2$.
Moreover, up to a scalar, $R_{\pi_1,\pi_2}$ is the only non-zero intertwining operator from $\pi_1\times \pi_2$ to $\pi_2\times \pi_1$.
\end{lemma}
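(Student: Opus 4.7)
The plan is to squeeze $\Ima(R_{\pi_1,\pi_2})$ between a simple socle and a simple cosocle, both isomorphic to $\pi_1 \ast \pi_2$, and deduce that it coincides with them.

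First, I would invoke Proposition \ref{prop:cyclic} to translate the hypothesis $\Cyc(\pi_1,\pi_2)$ into the statement that $\pi_1 \ast \pi_2$ is the unique irreducible quotient of $\pi_1 \times \pi_2$; combined with the Jordan--H\"older multiplicity-one property of $\pi_1 \ast \pi_2$ inside $\pi_1\times \pi_2$ from Remark \ref{rem:sum}\eqref{it:sum2}, this gives that the cosocle of $\pi_1 \times \pi_2$ is the simple module $\pi_1 \ast \pi_2$. Applying Proposition \ref{prop:GK} with $t=2$ and swapped arguments, the socle of $\pi_2 \times \pi_1$ is also the simple module $\pi_1 \ast \pi_2$.

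Now I would set $M := \Ima(R_{\pi_1,\pi_2})$ and exploit that $M$ is simultaneously a nonzero subrepresentation of $\pi_2 \times \pi_1$ and a nonzero quotient of $\pi_1 \times \pi_2$. The first realization forces $\soc(M) \cong \pi_1 \ast \pi_2$, since $\soc(M) \hookrightarrow \soc(\pi_2 \times \pi_1) = \pi_1 \ast \pi_2$ must be nonzero. The second realization forces $\cosoc(M) \cong \pi_1 \ast \pi_2$, since $\cosoc(M)$ is a nonzero quotient of the simple cosocle of $\pi_1 \times \pi_2$. Thus any Jordan--H\"older filtration of $M$ has bottom factor and top factor both equal to $\pi_1 \ast \pi_2$; but the multiplicity of $\pi_1 \ast \pi_2$ in $\pi_1\times\pi_2$, and hence in its subquotient $M$, is at most one, so these two copies must coincide. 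This collapses $M$ to a module of length one, giving $M \cong \pi_1 \ast \pi_2$.

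For the uniqueness statement, I would observe that the very same argument applies to the image of an arbitrary nonzero $\varphi \in \Hom(\pi_1 \times \pi_2,\pi_2\times\pi_1)$, identifying $\Ima(\varphi)$ with the socle of $\pi_2 \times \pi_1$. Consequently $\varphi$ must factor as
\[
\pi_1 \times \pi_2 \twoheadrightarrow \pi_1 \ast \pi_2 \xrightarrow{\;c\;} \pi_1 \ast \pi_2 \hookrightarrow \pi_2 \times \pi_1,
\]
where the outer arrows are canonical and independent of $\varphi$, while $c \in \C$ by Schur's lemma. Hence $\Hom(\pi_1\times\pi_2,\pi_2\times\pi_1)$ is one-dimensional, spanned by $R_{\pi_1,\pi_2}$. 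There is no serious obstacle here; the only delicate point is the bookkeeping with socles, cosocles, and multiplicity one, which together trap $M$ at length one.
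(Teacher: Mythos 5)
Your proposal is correct and follows essentially the same route as the paper's proof: use Proposition~\ref{prop:cyclic} and Proposition~\ref{prop:GK} to identify the unique irreducible quotient of $\pi_1\times\pi_2$ and the unique irreducible subrepresentation of $\pi_2\times\pi_1$ as $\pi_1\ast\pi_2$, then invoke the multiplicity-one property of Remark~\ref{rem:sum}\eqref{it:sum2} to force the image to be simple, and finish the uniqueness claim with Schur's lemma. The only difference is that you spell out the socle/cosocle trapping argument in a bit more detail, which the paper compresses into a single sentence.
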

\begin{proof}
By Proposition \ref{prop:cyclic}, $\pi_1\ast \pi_2$ is the unique irreducible quotient of $\pi_1\times \pi_2$. Through Proposition \ref{prop:GK}, we also know that $\pi_1\ast\pi_2$ is the unique irreducible subrepresentation of $\pi_2\times \pi_1$.

Now, suppose that $T:\pi_1\times \pi_2\to \pi_2\times \pi_1$ is a non-zero intertwining operator. We write $K = \Ima T$. Then, $K$ must have both a unique irreducible quotient (as a quotient of $\pi_1\times \pi_2$) and a unique irreducible subrepresentation (as a subrepresentation of $\pi_2\times\pi_1$). In particular, both the quotient and the sub-representation must be isomorphic to $\pi_1\ast\pi_2$. However, by Remark \ref{rem:sum}\eqref{it:sum2}, $\pi_1\ast\pi_2$ cannot appear in $K$ with multiplicity. Thus, $K\cong \pi_1\ast\pi_2$.

Again, by the same multiplicity-one property, we see that the kernel and (irreducible) image of $T$ are uniquely determined. It follows by Schur's lemma that $T$ is unique, up to a scalar. Since $R_{\pi_1,\pi_2}$ is non-zero the result follows.
\end{proof}

Let  $\pi_1,\ldots,\pi_k\in \Irr$. For a simple transposition $\epsilon_i = (i\,i+1)\in \Symm_k$, we denote:
\[
R^{i}=  \id \times R_{\pi_i,\pi_{i+1}}\times \id
\]
for the operator between
\[
\pi_1\times \cdots \times \pi_{i-1} \times \pi_i\times \pi_{i+1}\times \pi_{i+2}\times \cdots \times \pi_k \;\to\; \pi_1\times \cdots \times \pi_{i-1} \times \pi_{i+1}\times \pi_{i}\times \pi_{i+2}\times \cdots \times \pi_k \;.
\]

\begin{proposition}\label{prop:nontriv-intert}
Suppose that $\pi_1,\ldots,\pi_k\in \Irr$ are representations, for which $\Cyc(\pi_i,\pi_j)$ holds for all $1\leq i< j\leq k$.

Let $\omega = \epsilon_{i_t}\cdots \epsilon_{i_1}\in \Symm_k$ be a permutation given in terms of a reduced decomposition into simple transpositions.

Then, the image of the operator
\[
R: = R^{i_t}\circ \cdots\circ R^{i_1}: \pi_1\times\cdots\times \pi_k \to \pi_{\omega^{-1}(1)}\times\cdots\times \pi_{\omega^{-1}(k)}
\]
contains $\pi_1\ast\cdots\ast \pi_k$ as an irreducible subquotient. In particular, $R$ is non-zero.
\end{proposition}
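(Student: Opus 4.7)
The plan is induction on the length $t = \ell(\omega)$. The base case $t=0$ is immediate, since $R$ is the identity and $\pi_1\ast\cdots\ast\pi_k$ appears as a subquotient of $\pi_1\times\cdots\times\pi_k$ by Remark \ref{rem:sum}\eqref{it:sum2}. For the inductive step, factor $\omega = \epsilon_{i_t}\sigma$ with $\sigma = \epsilon_{i_{t-1}}\cdots\epsilon_{i_1}$ reduced, and let $K_{t-1}$ denote the image of $R^{i_{t-1}}\circ\cdots\circ R^{i_1}$, a subrepresentation of $\pi_{\sigma^{-1}(1)}\times\cdots\times\pi_{\sigma^{-1}(k)}$ which contains $\pi_1\ast\cdots\ast\pi_k$ as a composition factor by the inductive hypothesis. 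Reducedness of $\omega$ forces $a := \sigma^{-1}(i_t) < \sigma^{-1}(i_t+1) =: b$, so that in the ambient induction $R^{i_t}$ acts as $\id \times R_{\pi_a,\pi_b}\times \id$. From the short exact sequence
\[
0\to K_{t-1}\cap \Ker R^{i_t}\to K_{t-1}\to R^{i_t}(K_{t-1})\to 0,
\]
the task reduces to showing that $\pi_1\ast\cdots\ast\pi_k$ is not a composition factor of $\Ker R^{i_t}$.

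Writing $\pi_i = L(\m_i)$, Lemma \ref{lem:Rcyclic} applied to $\Cyc(\pi_a,\pi_b)$ yields that $R_{\pi_a,\pi_b}$ has irreducible image $L(\m_a+\m_b)$. Hence every composition factor of $\Ker R_{\pi_a,\pi_b}$ is some $L(\mathfrak{p})$ strictly less than $L(\m_a+\m_b)$ in the Zelevinsky partial order, and by exactness of parabolic induction, every composition factor of $\Ker R^{i_t}$ appears in a product $\pi_{\sigma^{-1}(1)}\times\cdots\times L(\mathfrak{p})\times\cdots\times\pi_{\sigma^{-1}(k)}$ for such $L(\mathfrak{p})$. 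The key input is the strict monotonicity of the Zelevinsky order under sum of multisegments: $L(\mathfrak{p})\prec L(\m_a+\m_b)$ implies $L(\mathfrak{p}+\mathfrak{n})\prec L(\m_a+\m_b+\mathfrak{n})$ for any multisegment $\mathfrak{n}$. This follows by multiplying $[L(\mathfrak{p})]\le [\Sigma(\m_a+\m_b)]$ by $[\Sigma(\mathfrak{n})]$ using Remark \ref{rem:sum}(1), passing through $[L(\mathfrak{p}+\mathfrak{n})]\le [L(\mathfrak{p})\times L(\mathfrak{n})] \le [L(\mathfrak{p})\times \Sigma(\mathfrak{n})]$, and noting that injectivity of $\mathfrak{m}\mapsto L(\mathfrak{m})$ forces strictness to persist after addition. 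Specializing $\mathfrak{n} = \sum_{i\neq a,b}\m_i$, every composition factor of the relevant induction lies strictly below $L(\m_1+\cdots+\m_k) = \pi_1\ast\cdots\ast\pi_k$, yielding the exclusion and closing the induction.

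The main obstacle is justifying the strict monotonicity of the partial order under addition of multisegments; this is also precisely the point where the pairwise cyclicity hypothesis is indispensable, as it guarantees via Lemma \ref{lem:Rcyclic} that $L(\m_a+\m_b)$ is the actual irreducible image of $R_{\pi_a,\pi_b}$, and hence excluded from $\JH(\Ker R_{\pi_a,\pi_b})$ by the multiplicity-one statement of Remark \ref{rem:sum}\eqref{it:sum2}.
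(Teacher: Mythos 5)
Your proof is correct, and it reaches the same target as the paper (excluding $\Pi := \pi_1\ast\cdots\ast\pi_k$ from $\Ker R^{i_t}$ after applying the inductive hypothesis), but the mechanism you use at this key step is genuinely different. The paper argues directly that $\Pi$ \emph{does} appear in $\Ima R^{i_t}$ on the full ambient product: by Lemma \ref{lem:Rcyclic} and exactness of parabolic induction, $\Ima R^{i_t} = \pi_{\cdots}\times(\pi_a\ast\pi_b)\times\pi_{\cdots}$, which contains $\Pi$ as a subquotient by Remark \ref{rem:sum}\eqref{it:sum2}; since $\Pi$ has multiplicity one in the ambient $k$-fold product (again Remark \ref{rem:sum}\eqref{it:sum2}), it cannot also appear in $\Ker R^{i_t}$. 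You instead bound the composition factors of $\Ker R^{i_t}$ from above using a strict monotonicity property of the Zelevinsky order $\preceq$ under addition of multisegments. That monotonicity claim is correct, and your justification of it (multiply $[L(\mathfrak{p})]\leq[\Sigma(\m_a+\m_b)]$ by $[\Sigma(\mathfrak{n})]$, pass through $[L(\mathfrak{p}+\mathfrak{n})]\leq[L(\mathfrak{p})\times\Sigma(\mathfrak{n})]$, and use injectivity of $\m\mapsto L(\m)$ for strictness) is the same chain of inequalities that the paper uses in the proof of Proposition \ref{prop:cyclic} (\eqref{it:1}$\Rightarrow$\eqref{it:3}), so it is in keeping with the paper's toolkit. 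The trade-off: the paper's route is shorter, applying multiplicity-one in the $k$-fold product directly; your route localizes the multiplicity-one input to the pair $(\pi_a,\pi_b)$ and then propagates the exclusion through a partial-order argument, at the cost of proving the auxiliary monotonicity lemma. One small point worth making explicit in your write-up: after concluding $L(\mathfrak{p}+\mathfrak{n})\prec\Pi$, you should note that antisymmetry of $\preceq$ then gives $\Pi\not\preceq L(\mathfrak{p}+\mathfrak{n})$, which is what actually excludes $\Pi$ from the Jordan--H\"older series of $\pi_{\cdots}\times L(\mathfrak{p})\times\pi_{\cdots}$.
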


\begin{proof}
We prove it by induction on the length of $\omega$, that is, $t$.

By the induction hypothesis for $\omega':= \epsilon_{i_t}\omega$, the image $R^{i_{t-1}}\circ \cdots\circ R^{i_1}$ has $\Pi:=\pi_1\ast \cdots\ast \pi_k$ as a subquotient. Thus, it is enough to prove that $\Pi$ does not appear in the kernel of $R^{i_t}$.

By Remark \ref{rem:sum}\eqref{it:sum2}, $\Pi$ appears in $\pi_{\omega'^{-1}(1)}\times\cdots\times \pi_{\omega'^{-1}(k)}$ with multiplicity $1$. Hence, it is enough to show that $\Pi$ appears in the image of $R^{i_t}$. Indeed, since the given decomposition of $\omega$ is reduced, we know that $\omega^{-1}(i_t) < \omega^{-1}(i_{t+1})$ and $\Cyc(\pi_{\omega^{-1}(i_t)},\pi_{\omega^{-1}(i_{t+1})})$ holds. Thus, by Lemma \ref{lem:Rcyclic}, $\Ima R^{i_t}$ is given as
\[
\pi_{\omega^{-1}(1)}\times \cdots \times \pi_{\omega^{-1}(i_t)}\ast \pi_{\omega^{-1}(i_{t+1})} \times\cdots \times \pi_{\omega^{-1}(k)}\;.
\]
In particular, again by Remark \ref{rem:sum}\eqref{it:sum2}, it contains $\Pi$.
\end{proof}

\section{Proof of the main theorem}
Following the terminology of \cite{MR3866895}, we say that $\pi \in \Irr$ is a \textit{square-irreducible} representation, if $\pi\times \pi$ is irreducible.
We recall \cite[Corllary 2.5]{MR3866895} that those representations can be characterized by means of standard intertwining operators: A representation $\pi\in \Irr$ is square-irreducible, if and only if, $R_{\pi,\pi}$, as an intertwining operator from the space of $\pi\times \pi$ to itself, is a scalar operator.

Our main tool for exploiting the property of square-irreducibility will be the following potent lemma that was devised by Kang-Kashiwara-Kim-Oh, and was naturalized in our setting in \cite{MR3866895}. We state a reversed version of it here.

\begin{lemma}\cite[Lemma 3.1]{MR3314831}\cite[Corollary 2.2]{MR3866895}\label{lem:kkko}
Let $\pi_i\in\Reps(G_{n_i})$, $i=1,2,3$.
Let $\sigma$ (resp., $\lambda$) be a subrepresentation of $\pi_1\times\pi_2$ (resp., $\pi_2\times\pi_3$).
Assume that $\pi_1\times\lambda\subset \sigma\times\pi_3$. Then there exists a subrepresentation $\kappa$ of $\pi_2$ such that
$\pi_1\times\kappa\subset \sigma$ and $\lambda\subset\kappa\times\pi_3$.
\end{lemma}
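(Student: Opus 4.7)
My plan is to adapt the celebrated lemma of Kang--Kashiwara--Kim--Oh to the $p$-adic framework, using Bernstein's second adjointness (Frobenius reciprocity) together with the exactness of parabolic induction, following the template of \cite{MR3866895}.

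First, I would reformulate the hypothesis into the vanishing of a map. Setting $Q = (\pi_1\times\pi_2)/\sigma$, the exactness of parabolic induction yields a short exact sequence
\[
0 \to \sigma\times\pi_3 \to \pi_1\times\pi_2\times\pi_3 \to Q\times\pi_3 \to 0,
\]
so the assumption $\pi_1\times\lambda\subset\sigma\times\pi_3$ is equivalent to the vanishing of the composite
\[
\varphi : \pi_1\times\lambda \hookrightarrow \pi_1\times\pi_2\times\pi_3 \twoheadrightarrow Q\times\pi_3.
\]
Dually, for a candidate $\kappa\subset\pi_2$, the conclusion $\lambda\subset\kappa\times\pi_3$ is equivalent to the vanishing of the composite $\lambda \hookrightarrow \pi_2\times\pi_3 \twoheadrightarrow (\pi_2/\kappa)\times\pi_3$.

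The natural candidate is to take $\kappa \subset \pi_2$ to be the maximal subrepresentation (well-defined by finite length) with the property that $\pi_1\times\kappa\subset\sigma$. The first required inclusion is then immediate. For the second, I would apply second adjointness to $\varphi$, transforming it into a morphism $\widetilde\varphi : \pi_1 \otimes \lambda \to J^{-}(Q\times\pi_3)$, where $J^{-}$ is the Jacquet functor along the opposite parabolic with Levi $G_{n_1}\times G_{n_2+n_3}$. The geometric lemma of Bernstein-Zelevinsky describes $J^-(Q\times\pi_3)$ as a filtered object whose leading term involves $\pi_1$ as a direct tensor factor, and the maximality defining $\kappa$ should identify this leading quotient precisely with $\pi_1 \otimes ((\pi_2/\kappa)\times\pi_3)$. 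Vanishing of $\widetilde\varphi$ on this leading piece, re-interpreted via the reverse adjunction, would then force the embedding $\lambda \hookrightarrow \pi_2\times\pi_3$ to land inside $\kappa\times\pi_3$.

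The main obstacle I anticipate is controlling the intermediate terms of the geometric lemma filtration on $J^-(Q\times\pi_3)$, which are indexed by non-trivial Weyl double cosets and could \emph{a priori} obstruct transferring the vanishing from the associated graded to an honest subobject-level statement. Handling this will require careful use of exactness of parabolic induction together with the extremal characterization of $\kappa$, so that the leading-term information suffices to conclude the inclusion $\lambda \subset \kappa \times \pi_3$. This delicate bookkeeping, essentially the heart of the KKKO technique, is what makes the proof nontrivial despite the clean statement.
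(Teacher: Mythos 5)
The paper does not actually prove this lemma; it imports it, with citations, from Kang--Kashiwara--Kim--Oh (\cite{MR3314831}, Lemma 3.1) and its $p$-adic naturalization in \cite[Corollary 2.2]{MR3866895}. So there is no in-paper argument to compare against; I can only assess your attempt on its own terms, against what the cited proof actually does.

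Your strategic framework --- choose $\kappa$ maximal, reformulate the hypothesis and conclusion as vanishing of maps, pass through Bernstein's second adjointness, and invoke the geometric lemma --- is the right circle of ideas, and matches the spirit of the argument in \cite{MR3866895}. But the proposal contains a genuine gap, which you yourself flag and do not close, and moreover the one concrete structural claim you do make is unjustified and looks wrong as stated. You assert that the leading quotient of $J^-(Q\times\pi_3)$, with $Q=(\pi_1\times\pi_2)/\sigma$, ``should'' be $\pi_1\otimes\bigl((\pi_2/\kappa)\times\pi_3\bigr)$. The geometric lemma applied to $J^-_{(n_1,n_2+n_3)}(Q\times\pi_3)$ produces subquotients built from Jacquet modules of $Q$ and $\pi_3$; the relevant extremal piece is governed by $r_{(n_1,n_2)}(Q)$. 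Even its outer layer is only a quotient of $\pi_1\otimes\pi_2$ by the image of $r_{(n_1,n_2)}(\sigma)$, and there is no a priori reason for that image to be of pure-tensor shape $\pi_1\otimes\kappa$. Extracting such an alignment from the maximality of $\kappa$ is exactly the content of the KKKO/Lapid--M\'inguez argument (which proceeds via the unit of the second adjunction and a careful comparison with the Mackey filtration, not by examining $J^-(Q\times\pi_3)$ directly), and it is precisely the step your sketch defers as ``delicate bookkeeping.'' A secondary, easily repairable omission: you invoke a \emph{maximal} $\kappa\subset\pi_2$ with $\pi_1\times\kappa\subset\sigma$ without observing that the collection of such $\kappa$ is closed under sums (which follows since $\pi_1\times(\kappa_1+\kappa_2)=(\pi_1\times\kappa_1)+(\pi_1\times\kappa_2)$ inside $\pi_1\times\pi_2$ by exactness), so a largest one exists.
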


Theorem \ref{thm:main-sqirr} below may be viewed as a corollary of Proposition \ref{prop:nontriv-intert} and the analog of \cite[Lemma 2.6]{MR4016058} or \cite[Proposition 3.2.14]{MR3758148}, when translated to our setting. Since the suitable translation from the representation theory of quiver Hecke algebras is rather involved, we prefer to construct a native proof relying on ideas that were already naturalized in our setting through previous literature.

\begin{theorem}\label{thm:main-sqirr}
Let $\pi_1,\ldots, \pi_n\in \Irr$, such that $\Cyc(\pi_i,\pi_j)$ holds, for all $1\leq i< j\leq n$. Assume that $\pi_1,\ldots, \pi_{n-2}$ are square-irreducible representations.

Then, $\Cyc(\pi_1,\ldots,\pi_n)$ holds as well.
\end{theorem}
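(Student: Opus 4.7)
The plan is to induct on $n$, with the base case $n=2$ being exactly the hypothesis $\Cyc(\pi_1,\pi_2)$. For the inductive step at $n \geq 3$, I would first verify that the theorem's hypothesis is inherited by both subsequences $(\pi_1, \ldots, \pi_{n-1})$ (in which only $\pi_{n-1}$ may fail to be square-irreducible) and $(\pi_2, \ldots, \pi_n)$ (in which only $\pi_{n-1}, \pi_n$ may fail): each has at most two non-square-irreducible factors occurring at the tail. Applying the inductive hypothesis yields $\Cyc(\pi_1, \ldots, \pi_{n-1})$ and $\Cyc(\pi_2, \ldots, \pi_n)$; by Proposition~\ref{prop:GK} this gives that $\tau_L := \pi_1 \ast \cdots \ast \pi_{n-1}$ is the unique simple submodule of $\pi_{n-1} \times \cdots \times \pi_1$, and $\sigma_R := \pi_2 \ast \cdots \ast \pi_n$ is the unique simple submodule of $\pi_n \times \cdots \times \pi_2$. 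By Propositions~\ref{prop:cyclic} and~\ref{prop:GK}, the goal reduces to showing that $\Pi := \pi_1 \ast \cdots \ast \pi_n$ is the unique simple submodule of $\pi_n \times \cdots \times \pi_1$.

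Given any simple submodule $\tau \hookrightarrow \pi_n \times \cdots \times \pi_1$, I would apply Lemma~\ref{lem:kkko} iteratively, combined with the non-zero intertwining operators supplied by Proposition~\ref{prop:nontriv-intert}, to force $\tau \cong \Pi$. Concretely, using the three-factor decomposition $\pi_n \times (\pi_{n-1} \times \cdots \times \pi_2) \times \pi_1$, one takes $\sigma = \pi_n \times \sigma_{\mathrm{mid}}$ (where $\sigma_{\mathrm{mid}}$ is the unique simple submodule of $\pi_{n-1} \times \cdots \times \pi_2$, produced by a further application of induction to $(\pi_2, \ldots, \pi_{n-1})$) and $\lambda = \tau_L$; the hypothesis $\pi_n \times \lambda \subset \sigma \times \pi_1$ holds because $\tau_L \subset \sigma_{\mathrm{mid}} \times \pi_1$ (by simplicity/uniqueness of $\tau_L$ as a submodule of $\pi_{n-1} \times \cdots \times \pi_1$). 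The lemma's conclusion, combined with exactness and faithfulness of parabolic induction and simplicity of $\sigma_{\mathrm{mid}}$, forces the middle $\kappa$ to be exactly $\sigma_{\mathrm{mid}}$. Analogous KKKO applications across other three-factor splittings, integrated with operators from Proposition~\ref{prop:nontriv-intert} whose images lie inside $\sigma_R \times \pi_1$ on the one side and $\pi_n \times \tau_L$ on the other, together pin down $\tau$ to lie in a subrepresentation whose socle must be $\Pi$.

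The role of the square-irreducibility hypothesis on $\pi_1, \ldots, \pi_{n-2}$ is to ensure, via Lemma~\ref{lem:intert}(2), the pole-order additivities needed so that iterated R-matrix compositions of the type produced in Proposition~\ref{prop:nontriv-intert} coincide with the ``big'' intertwining operators $R_{\pi_i,\, \pi_{j_1} \times \cdots \times \pi_{j_k}}$; without these additivities the compositions could vanish identically, depriving the iterated KKKO argument of non-zero subrepresentations to analyze. The two potentially non-SI factors $\pi_{n-1}, \pi_n$ sit at the tail of the sequence and are absorbed by the base case of the induction. I expect the main obstacle to be the precise choreography of the KKKO applications — carefully matching each $\sigma, \lambda$ so that the resulting $\kappa$ yields new restrictions on $\tau$ rather than a vacuous tautology. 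The argument is in the spirit of the normal-sequence framework of~\cite{MR4016058} and~\cite{MR3758148}, naturalized to the $p$-adic Bernstein--Zelevinsky setting as indicated by the paper's introductory remarks.
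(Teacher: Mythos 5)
Your proposal assembles the right ingredients (the KKKO Lemma \ref{lem:kkko}, Proposition \ref{prop:nontriv-intert}, the intertwining operator machinery of Lemma \ref{lem:intert}, induction on $n$, and the multiplicity-one property), and your awareness that the KKKO applications risk being vacuous is well-founded. But there is a genuine gap: as written, the KKKO application you describe is indeed tautological and carries no information about the arbitrary simple submodule $\tau$. You take $\sigma = \pi_n \times \sigma_{\mathrm{mid}}$ and $\lambda = \tau_L$, neither of which depends on $\tau$; the output $\kappa$ is forced to be $\sigma_{\mathrm{mid}}$ by simplicity, confirming something you already knew, and $\tau$ never enters. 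The subsequent ``analogous KKKO applications across other three-factor splittings'' and the claim that they ``together pin down $\tau$'' is the missing proof, not an outline of it. You have not identified the input to KKKO that would actually constrain $\tau$.

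The paper's own proof makes $\tau$ itself the input. Working with $\pi_1 \times \Pi \times \pi_1$ (where $\Pi = \pi_2 \times \cdots \times \pi_n$, not your middle block), one takes $\sigma = \tau$, a proper subrepresentation of $\pi_1 \times \Pi$, and $\lambda = R_{\pi_1,\Pi}(\tau)$, a subrepresentation of $\Pi \times \pi_1$. The required containment $\pi_1 \times \lambda \subset \sigma \times \pi_1$ is supplied precisely by the square-irreducibility of $\pi_1$: since $R_{\pi_1,\pi_1}$ is a scalar, the big operator factors as $R_{\pi_1,\pi_1\times\Pi} = \lambda\,(\id_{\pi_1} \times R_{\pi_1,\Pi})$, and restriction along $\tau$ (Lemma \ref{lem:intert}\eqref{part: nonzeroR}) then yields exactly $\pi_1 \times R_{\pi_1,\Pi}(\tau) \subset \tau \times \pi_1$. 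KKKO now produces $\kappa \subset \Pi$ with $\pi_1 \times \kappa \subset \tau$, which forces $\kappa$ to be proper because $\tau$ is, hence $\kappa \subset \eta$ (the unique maximal proper subrepresentation of $\Pi$, available by the inductive hypothesis). The other output, $R_{\pi_1,\Pi}(\tau) \subset \eta \times \pi_1$, combined with Proposition \ref{prop:nontriv-intert} (which guarantees $\Ima R_{\pi_1,\Pi}$ contains $\Theta = \pi_1\ast\cdots\ast\pi_n$) and multiplicity one, shows $\tau$ cannot contain $\Theta$ as a subquotient. This single, $\tau$-dependent application is the crux; your account of where square-irreducibility enters (pole-order additivity making iterated R-matrices agree with the big ones) is related but not the actual mechanism, which is the scalar nature of $R_{\pi_1,\pi_1}$ enabling the factorization above. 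Your reformulation via socles and Gelfand--Kazhdan is harmless but adds complications relative to working directly with cosocles of $\pi_1 \times \Pi$.
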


\begin{proof}

We reason by induction on $n$. Hence, we assume that $\Cyc(\pi_2,\ldots,\pi_n)$ holds. Let us write $\Pi= \pi_2\times \cdots \times \pi_n$. By Proposition \ref{prop:cyclic}, $\Pi$ has a unique irreducible quotient, which is isomorphic to $\Theta'= \pi_2\ast \cdots \ast \pi_n$.
Let $\eta$ denote the sub-representation of $\Pi$, which is the kernel of $\Pi \to \Theta'$. It is the unique maximal proper sub-representation of $\Pi$.

Again, from Proposition \ref{prop:cyclic}, we know that it is enough to prove that $\pi_1\times \Pi$ has a unique irreducible quotient, which is isomorphic to $\Theta= \pi_1 \ast \Theta'$. In other words, we need to show that any proper subrepresentation $\tau \subset \pi_1\times \Pi$, does not contain $\Theta$ as a subquotient.

Let $\tau \subset \pi_1\times \Pi$ be a proper sub-representation. We consider the intertwining operator
\[
R_{\pi_1,\pi_1\times \Pi}: \pi_1 \times \pi_1 \times \Pi \to \pi_1 \times \Pi \times \pi_1\;.
\]
Restricting to $\tau$, we obtain an operator $\pi_1\times \tau \to \tau \times \pi_1$, by Lemma \ref{lem:intert}\eqref{part: nonzeroR}.

By assumption, $R_{\pi_1,\pi_1}$ is a scalar operator. Hence, by Lemma \ref{lem:intert}\eqref{part: nonzeroR}, $R_{\pi_1,\pi_1\times \Pi} = \lambda( \id_{\pi_1}\times R_{\pi_1,\Pi})$, for a non-zero $\lambda\in \mathbb{C}$. This identity gives us the following inclusion of subrepresentations of $\pi_1\times \Pi \times \pi_1$:
\[
\pi_1 \times R_{\pi_1,\Pi}(\tau) \subset\tau \times \pi_1\;.
\]
At this point, we can apply Lemma \ref{lem:kkko} to obtain a subrepresentation $\kappa \subset \Pi$, such that both inclusions $R_{\pi_1,\Pi}(\tau) \subset \kappa \times  \pi_1$ and $ \pi_1\times \kappa\subset \tau$ hold.

The latter inclusion implies that $\kappa$ is a proper subrepresentation of $\Pi$, hence, $\kappa \subset \eta$. The former inclusion then implies that $R_{\pi_1,\Pi}(\tau) \subset \eta \times  \pi_1$.

Now, recall that $\Theta$ appears in $\Pi \times\pi_1$ with multiplicity one, and that it must also appear in $\Theta' \times \pi_1$. Thus, by exactness of parabolic induction $\eta \times \pi_1$, and its subrepresentation $R_{\pi_1,\Pi}(\tau)$, cannot contain a subquotient isomorphic to $\Theta$.

On the other hand, applying Proposition \ref{prop:nontriv-intert} and Lemma \ref{lem:intert}\eqref{part: nonzeroR}, we see that $R_{\pi_1,\Pi}$ must be a non-zero operator, whose image contains $\Theta$ as a subquotient. Using the multiplicity-one property again, we conclude that $\ker R_{\pi_1,\Pi}$ cannot contain $\Theta$ as a subquotient.

If both $\ker R_{\pi_1,\Pi}$ and $ R_{\pi_1,\Pi}(\tau)$ cannot contain $\Theta$ as a sub-quotient, neither can $\tau$.

\end{proof}

The following corollary generalizes \cite[Corollary 6.9]{MR3573961}.

\begin{corollary}\label{cor:main}
Let $\pi_1, \dots ,\pi_n \in \Irr$ be given. Assume that all but at most two are square irreducible.
Then, $\pi_i \times \pi_j$ is irreducible for all $1\leq i < j\leq n$, if and only if, $\pi_1  \times \dots \times \pi_n$ is irreducible.
\end{corollary}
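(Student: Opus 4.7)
The plan is to prove both directions. The backward direction is elementary; the forward direction reduces to Theorem \ref{thm:main-sqirr} combined with a Gelfand--Kazhdan argument.

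For the backward direction, suppose $V := \pi_1\times\cdots\times\pi_n$ is irreducible. Commutativity on the Grothendieck group implies every reordering of the factors has length-one Jordan--H\"older series, hence is itself irreducible and isomorphic to $V$. Fixing $i<j$ and placing $\pi_i,\pi_j$ in the first two slots of such a reordering yields an irreducible product of the form $(\pi_i\times\pi_j)\times Y$; exactness of parabolic induction then rules out any proper non-zero subrepresentation of $\pi_i\times\pi_j$.

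For the forward direction, pairwise irreducibility supplies isomorphisms $\pi_i\times\pi_j\cong \pi_j\times\pi_i$ and the property $\Cyc(\pi_i,\pi_j)$ for every $i\neq j$. Composing adjacent-transposition isomorphisms, every reordering of the $\pi_i$'s yields a representation isomorphic to $V$; in particular $V \cong \pi_n\times\cdots\times\pi_1$. After relabeling, assume $\pi_1,\ldots,\pi_{n-2}$ are square-irreducible. Theorem \ref{thm:main-sqirr} together with Proposition \ref{prop:cyclic} now endows $V$ with a unique simple quotient $L := \pi_1\ast\cdots\ast\pi_n$. Proposition \ref{prop:GK} next yields $\soc(V)\cong\cosoc(\pi_n\times\cdots\times\pi_1)\cong\cosoc(V)=L$, where the second isomorphism uses the reordering isomorphism just obtained. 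Hence $V$ has both simple socle and simple cosocle isomorphic to $L$. Since $L$ appears in the Jordan--H\"older series of $V$ with multiplicity one by Remark \ref{rem:sum}\eqref{it:sum2}, a standard argument concludes $V=L$: were $V$ strictly larger than its socle $S\cong L$, the nonzero quotient $V/S$ would admit $L$ as a simple quotient (every simple quotient of $V/S$ is also one of $V$), contradicting the multiplicity-one property.

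The principal obstacle is that Theorem \ref{thm:main-sqirr} cannot be applied directly to the reversed ordering $\pi_n,\ldots,\pi_1$, whose first $n-2$ entries include the possibly non-square-irreducible factors. The observation that pairwise irreducibility forces every reordering to give an isomorphic representation sidesteps this and allows the socle computation via Gelfand--Kazhdan duality to collapse back to the cosocle of $V$ itself.
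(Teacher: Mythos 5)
Your proof is correct. Both you and the paper take the same overall route: use pairwise irreducibility to relabel so that $\pi_1,\dots,\pi_{n-2}$ are square-irreducible and to get $\Cyc(\pi_i,\pi_j)$ for all $i\neq j$, apply Theorem~\ref{thm:main-sqirr} to obtain a unique simple quotient $L=\pi_1\ast\cdots\ast\pi_n$, then produce $L$ also as the unique simple subrepresentation, and finally invoke the multiplicity-one property of Remark~\ref{rem:sum}\eqref{it:sum2} to conclude. The divergence is in the socle step: the paper applies Theorem~\ref{thm:main-sqirr} a second time to the contragredient product $\pi_1^\vee\times\cdots\times\pi_n^\vee$ and then dualizes, whereas you use the Gelfand--Kazhdan duality statement Proposition~\ref{prop:GK} directly, combined with the observation (again via pairwise irreducibility) that any reordering of the factors yields a product isomorphic to $V$, so that $\soc(V)\cong\cosoc(\pi_n\times\cdots\times\pi_1)\cong\cosoc(V)$. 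Your route is a little more economical in that the main theorem is invoked only once, and it avoids the unstated verification that the hypotheses of Theorem~\ref{thm:main-sqirr} pass to contragredients; the paper's route avoids appealing to the reordering isomorphism of the full $n$-fold product.
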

\begin{proof}
One direction is obvious by exactness of parabolic induction. Let us prove the converse.
Recall that, for all $i< j$, since $\pi_i\times \pi_j\in \Irr$, we have $\pi_j\times \pi_i\cong \pi_i\times \pi_j$. In particular, we can assume that $\pi_1, \dots, \pi_{n-2}$ are square irreducible and $\Cyc(\pi_i,\pi_j)$ holds, for $i\neq j$. Thus, by Theorem \ref{thm:main-sqirr}, both $\Lambda = \pi_1\times\cdots \times \pi_n$ and $\Lambda'= \pi_1^\vee \times\cdots \times \pi_n^\vee$ are cyclic.

Therefore, $\Pi=\pi_1\ast\cdots \ast \pi_n$ appears as a unique irreducible quotient of  $\Lambda$ and $\Pi^\vee=\pi_1^\vee\ast\cdots \ast \pi_n^\vee$ appears as a unique irreducible quotient of $\Lambda'$. By applying the contragredient functor, we find that $\Pi$ appears also as a unique irreducible subrepresentation of $\Lambda$.
By Remark \ref{rem:sum}\eqref{it:sum2}, this forces $\Lambda$ to be irreducible.

\end{proof}



\bibliographystyle{amsalpha}
\bibliography{../Bibfiles/all,../propo2}

\end{document}